\theoremstyle{plain}
\newtheorem{thm}{Theorem}[section]
\newtheorem{lem}[thm]{Lemma}
\newtheorem{cor}[thm]{Corollary}
\theoremstyle{definition}
\newtheorem{dfn}[thm]{Definition}
\newtheorem{rem}[thm]{Remark}
\newtheorem{qst}[thm]{Question}
\newcommand{\CC}{\mathbb{C}}
\newcommand{\calT}{\mathcal{T}}
\newcommand{\calD}{\mathcal{D}}
\newcommand{\Int}{\mathrm{Int\,}}
\DeclareMathOperator{\id}{id}
\title[Non-diffeomorphic minimal genus relative trisections]{Non-diffeomorphic minimal genus relative trisections of the same 4-manifold}
\author[Natsuya Takahashi]{Natsuya Takahashi}
\date{June 5, 2024.}
\subjclass[2020]{Primary~57K40, Secondary~57R65}
\keywords{4-manifold; trisection; relative trisection}
\address{Department of Pure and Applied Mathematics, Graduate School of Information Science and Technology, Osaka University, 1-5 Yamadaoka, Suita, Osaka 565-0871, Japan}
\email{nt-takahashi@ist.osaka-u.ac.jp}
\begin{document}
\begin{abstract}
We show the existence of a $4$-manifold with boundary that admits two non-diffeomorphic minimal genus relative trisections of the same $(g,k;p,b)$-type. To prove this, we introduce a simple operation that produces a trisection diagram of a closed $4$-manifold from a relative trisection diagram.
\end{abstract}
\maketitle
%■■■■■■■■■■■■■■■■■■■■■■■■■
%■■■■■■■■■■■■■■■■■■■■■■■■■
%
\section{Introduction}

Any compact smooth $4$-manifold $X$ admits a \textit{trisection}, which is a decomposition of $X$ into three $4$-dimensional $1$-handlebodies intersecting along a compact surface.
A trisection for a $4$-manifold with non-empty boundary is called a \textit{relative trisection}.
The notion of (relative) trisections was introduced by Gay and Kirby~\cite{GayKir16} as a $4$-dimensional analogue of Heegaard splittings for $3$-manifolds.
In trisection theory, it is important to study the similarities between Heegaard splittings and trisections.

The classification problems for Heegaard splittings have been studied based on appropriate equivalence relations, such as homeomorphism and isotopy.
For example, Waldhausen~\cite{Wald68} proved that all Heegaard splittings of the $3$-sphere with the same genus are isotopic.
%
%Waldhausen_1968_Heegaard-Zerlegungen der 3-Sph¨are
%
Similar statements hold for other $3$-manifolds, such as lens spaces (\cite{BonOta83}, \cite{HodRub85}) and $\#^n(S^1 \times S^2)$ (\cite{Wald68}, see also \cite{CarOer05}).
%Bonahon--Otal_1983_Scindements de Heegaard des espaces lenticulaires
%
%Hodgson--Rubinstein_1985_Involutions and isotopies of lens spaces
%
%Carvalho--Oertel_2005a_A Classification of Automorphisms of Compact 3-Manifolds
%
On the other hand, it is known that \textit{there exist $3$-manifolds admitting more than one homeomorphism type (or isotopy type) of minimal genus Heegaard splittings} (see \cite{Eng70}, \cite{Bir75}, \cite{BirGonMon76}, for example).
%
%Engmann_1970_Nicht-hom¨oomorphe Heegaard-Zerlegungen vom Geschlecht 2 der zusammenh¨angenden Summe zweier Linsenr¨aume. 
%
%Birman_1975_On the equivalence of Heegaard splittings of closed, orientable 3-manifolds
%
%Birman--Gonzalez-Acufia--Montesinos_1976_Heegaard splittings of prime 3-manifolds are not unique
%
In this paper, we study the $4$-dimensional version of this fact.
Two (relative) trisections $X=X_1\cup X_2\cup X_3$ and $X'=X'_1\cup X'_2\cup X'_3$ are called \textit{diffeomorphic} if there exists an orientation-preserving diffeomorphism $f:X\to X'$ such that $f(X_i)=X'_i$ for each $i\in\{1,2,3\}$.
The following question asks about the existence of non-equivalent trisections of the same $4$-manifold.

\begin{qst}\label{qst:distinct_tris}
Does there exist a $4$-manifold admitting more than one diffeomorphism type of minimal genus (relative) trisections?
\end{qst}

In fact, Islambouli \cite{Isl21} has already shown that the answer is yes for closed $4$-manifolds.
%Islambouli_2021_Nielsen equivalence and trisections
%
He proved the existence of infinitely many spun Seifert fiber spaces that admit non-diffeomorphic trisections of the same $(g,k)$-type.
(In addition, Isoshima and Ogawa~\cite{IsoOga24a} also showed that the answers to the bisectional and 4-sectional analogues of Question~\ref{qst:distinct_tris} are yes.)
%Isoshima--Ogawa_2024_Nielsen equivalence and multisections of 4-manifolds
%
In \cite{Isl21} and \cite{IsoOga24a}, they used the notion of Nielsen equivalence to prove the non-equivalence of trisections and multisections.

%■■■■■■■■■■■■■■■■■■■■■■■■■
%■■■■■■■■■■■■■■■■■■■■■■■■■

On the other hand, the relative version of Question~\ref{qst:distinct_tris} has not been sufficiently studied yet.
Note that when considering Question~\ref{qst:distinct_tris} for $4$-manifolds with boundary, we exclude those with $S^3$ boundary.
This is because if a closed $4$-manifold $X$ admits two non-diffeomorphic $(g,k)$-trisections, then $X-\Int{D^4}$ also admits two non-diffeomorphic $(g,k;0,1)$-relative trisections.
(For details see Remark~\ref{rem:equiv-rtd-X-D4}.)
The main result of this paper is to prove that the answer to Question~\ref{qst:distinct_tris} is yes for $4$-manifolds with boundary by a completely different method from \cite{Isl21} and \cite{IsoOga24a}.

\begin{thm}\label{main:distinct-reltris}
There exists a $4$-manifold with boundary ($\not\cong S^3$) that admits two non-diffeomorphic minimal genus relative trisections of the same $(g,k;p,b)$-type, and furthermore, the induced open books are equivalent.
\end{thm}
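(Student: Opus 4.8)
The plan is to reduce the relative case to the closed case. Using an operation that caps off a relative trisection diagram to a closed one, I turn the assertion ``these two relative trisections of $Y$ are not diffeomorphic'' into the assertion ``these two closed $4$-manifolds are not diffeomorphic,'' which can be settled with an elementary invariant. \emph{The operation.} First I introduce an operation $\Phi$ that assigns to a relative trisection diagram $D=(\Sigma_{g,b};\alpha,\beta,\gamma)$ a trisection diagram $\Phi(D)$ of a \emph{closed} $4$-manifold, which I denote $\widehat X_D$, whose type $(\widehat g,\widehat k)$ depends only on $(g,k;p,b)$: one caps off the boundary circles of $\Sigma_{g,b}$, so the central surface becomes closed, and completes the three cut systems over the capping region by a fixed local rule. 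The point is that the relative-trisection compatibility conditions near $\partial\Sigma_{g,b}$ are exactly what makes $\Phi(D)$ satisfy the closed-trisection conditions, and that $\widehat X_D$ depends on the whole diagram $D$, not merely on $Y$ and the open book $\Phi$ reads off near $\partial Y$.

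\emph{Naturality — the key step.} I then prove: if $D$ and $D'$ represent diffeomorphic relative trisections, then $\Phi(D)$ and $\Phi(D')$ represent diffeomorphic closed trisections; in particular $\widehat X_D\cong\widehat X_{D'}$. Geometrically, a diffeomorphism of relative trisections carries the open book of $\partial Y$ to that of $\partial Y'$ and hence extends across the capping pieces to a diffeomorphism of the associated closed trisections taking sectors to sectors; at the diagram level, the moves generating relative-trisection equivalence (interior handle slides and diffeomorphisms of $\Sigma_{g,b}$ preserving the boundary structure) are each carried by $\Phi$ to moves generating closed-trisection equivalence — a boundary-preserving diffeomorphism of $\Sigma_{g,b}$ extends over the capping disks, and interior handle slides are untouched by capping. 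Taking the contrapositive gives the detection principle: \emph{if $\widehat X_{D}\not\cong\widehat X_{D'}$, then $D$ and $D'$ are non-diffeomorphic relative trisections.}

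\emph{The example, minimality, and the obstacle.} It remains to exhibit one $4$-manifold $Y$ with $\partial Y\not\cong S^3$ together with two relative trisection diagrams $D_1,D_2$ of $Y$ of the same $(g,k;p,b)$-type inducing equivalent open books on $\partial Y$, for which $\widehat X_{D_1}$ and $\widehat X_{D_2}$ are distinguished by an elementary invariant — the intersection form, the first homology, or the fundamental group — computed directly from $\Phi(D_1)$ and $\Phi(D_2)$. Here it is crucial that $D_1$ and $D_2$ are diagrams of the \emph{same} $Y$ yet inequivalent as relative trisections; the capping operation converts their a priori hard-to-detect interior difference into a visible difference between closed $4$-manifolds, and the detection principle then yields non-diffeomorphism. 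Finally I check that $(g,k;p,b)$ is the minimal type for $Y$: a homological lower bound on the genus of a relative trisection (in terms of the ranks of $H_*(Y)$ and $H_*(\partial Y)$ and the page data $p,b$) shows that the small value of $g$ realized by $D_1,D_2$ cannot be decreased. The main obstacle is the naturality step: the capping region and the completion of the cut systems must be arranged so that the indeterminacy of a relative trisection diagram is matched \emph{exactly} by the indeterminacy of a closed trisection diagram, so that $\Phi$ descends to equivalence classes; a secondary difficulty is engineering the example so that ``same $Y$'', ``same type'', ``equivalent induced open books'' and ``non-diffeomorphic caps'' all hold simultaneously, the equivalence of the two induced open books having to be checked directly.
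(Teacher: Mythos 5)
Your strategy is exactly the paper's: cap off the boundary circles of the central surface to turn a relative trisection diagram into a closed trisection diagram, prove that diffeomorphism-and-handleslide equivalence is preserved by capping (the diffeomorphism of $\Sigma_{g,b}$ extends over the capping disks by Smale's theorem, and handleslides happen in the interior), and then take the contrapositive to distinguish two relative trisections by distinguishing the capped closed $4$-manifolds. Two caveats. First, your ``fixed local rule'' for completing the cut systems over the capping region is vaguer than what actually works: the paper defines the operation only for $(g,k;0,b)$-diagrams, where each family already has $g-p=g$ curves, so after capping one has exactly the $g$ curves a genus-$g$ closed trisection diagram needs and \emph{nothing} is added. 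For $p>0$ the families have only $g-p$ curves and a genuine (and non-canonical) completion would be required; your proposal does not address this, so you should restrict to $p=0$ as the paper does. Second, and more substantively, your proposal stops at ``it remains to exhibit one $Y$'' --- but that exhibition is the heart of the theorem, and nothing in your outline guarantees such an example exists. The paper supplies it: two $(2,1;0,2)$-relative trisection diagrams of $S^2\times D^2$ whose cappings give a $(2,0)$-diagram of $S^2\times S^2$ and one of $\CC P^2\#\overline{\CC P^2}$, distinguished by the intersection form exactly as you hope; minimality is checked via the Euler characteristic formula $\chi=g-3k+3p+2b-1$ together with the fact that $S^2\times S^1$ admits no open book with disk pages; and the induced open books agree because an annulus-page open book of $S^2\times S^1$ forces identity monodromy. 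As written, your argument is a correct reduction scheme but not yet a proof of the existence statement.
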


We prove this theorem by giving two non-diffeomorphic relative trisections for $S^2\times D^2$.
In Section~\ref{sect:proof_nonequiv}, we will show that two $(2,1;0,2)$-relative trisection diagrams of $S^2\times D^2$ of Figure~\ref{fig:td-S2xD2_2} are not diffeomorphism and handleslide equivalent.
Moreover, the open book decompositions on the boundary $S^2\times S^1$ induced by these relative trisections are equivalent, so they cannot be distinguished by the boundary data.

To prove Theorem~\ref{main:distinct-reltris}, we introduce a simple operation that produces a trisection diagram of a closed $4$-manifold from a relative trisection diagram.
For a $(g,k;0,b)$-relative trisection diagram $\calD$, our operation yields a $(g,k-b+1)$-trisection diagram $\widehat{\calD}$ by gluing $b$ disks to the boundaries of the trisection surface (see Figure~\ref{fig:gd}).
For a precise description of this operation, see Section~\ref{sect:gd}.
We will prove the non-equivalence of relative trisection diagrams by using the following theorem.

\begin{thm}\label{thm:equiv-rtd-gd}
If two $(g,k;0,b)$-relative trisection diagrams $\calD$ and $\calD'$ are diffeomorphism and handleslide equivalent, then the $(g,k-b+1)$-trisection diagrams $\widehat{\calD}$ and $\widehat{\calD}'$ are also diffeomorphism and handleslide equivalent.
\end{thm}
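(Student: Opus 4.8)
The plan is to show that the capping operation $\calD \mapsto \widehat{\calD}$ of Section~\ref{sect:gd} is \emph{functorial} with respect to the moves generating the equivalence relation in the hypothesis, namely orientation-preserving diffeomorphisms of the trisection surface and handleslides within a cut system (isotopies being diffeomorphisms isotopic to the identity). Since $\calD$ and $\calD'$ are joined by a finite sequence of such moves, it suffices to check one move at a time that the capped diagrams before and after differ by a diffeomorphism and/or handleslides, and then concatenate. Throughout I use that $\widehat{\calD}$ lives on the closed surface $\Sigma_g$ obtained by capping each boundary circle of $\Sigma_{g,b}$ with a disk, that each cut system of $\widehat{\calD}$ consists of the closed curves of the corresponding cut system of $\calD$ together with the arcs of that system completed by chords inside the capping disks (Figure~\ref{fig:gd}), and that by Section~\ref{sect:gd} this diagram is well defined up to diffeomorphism and handleslides, independently of the auxiliary choices made near $\partial\Sigma_{g,b}$.

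Consider first a handleslide, say of the $\alpha$-system. A slide of a closed curve over a closed curve is supported in the interior of $\Sigma_{g,b}$, so after an isotopy pushing its band out of a collar of $\partial\Sigma_{g,b}$ the same band realizes a curve-over-curve handleslide of $\widehat{\calD}$ on $\Sigma_g$. A handleslide involving an arc has its band in a collar-complement as well and may be taken to keep the arc endpoints fixed; since capping alters the diagram only inside the disks, disjointly from the band, such a slide descends to a handleslide of $\widehat{\calD}$ on $\Sigma_g$, up to an isotopy inside the capping disks. That isotopy is harmless because two systems of disjoint chords in a disk with the same endpoint pairing are isotopic rel boundary. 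An isotopy of $\calD$ that is standard in a boundary collar extends to $\widehat{\calD}$ by the identity on the disks, and any isotopy can be composed with a collar isotopy to be put in this form.

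Consider next a diffeomorphism $f\colon\Sigma_{g,b}\to\Sigma_{g,b}$, orientation-preserving and carrying the cut systems of $\calD$ to those of the next diagram $\calD''$. It permutes the boundary circles preserving their induced orientations, hence extends over the capping disks (an orientation-preserving diffeomorphism of $S^1$ extends over $D^2$). Taking the capping disks for $\widehat{\calD}''$ to be the $f$-images of those for $\widehat{\calD}$, which is legitimate by the well-definedness in Section~\ref{sect:gd}, we obtain an orientation-preserving diffeomorphism $\widehat{f}\colon\Sigma_g\to\Sigma_g$. On closed-curve parts $\widehat{f}$ visibly carries $\widehat{\calD}$ to $\widehat{\calD}''$, and on the completed arcs $\widehat{f}$ sends each arc of a cut system of $\calD$ to the corresponding arc of $\calD''$ with matching endpoints and its completing chord to a chord with those same endpoints in the image disk; as any two such chords are isotopic rel boundary, $\widehat{f}$ carries $\widehat{\calD}$ to $\widehat{\calD}''$ up to an isotopy inside the disks. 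Hence $\widehat{\calD}$ and $\widehat{\calD}''$ are diffeomorphism equivalent.

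Concatenating these observations along the given sequence of moves proves the theorem. The step I expect to be the main obstacle is the bookkeeping near $\partial\Sigma_{g,b}$: establishing cleanly that the capping is canonical up to diffeomorphism and handleslides, that handleslide bands and isotopies can always be normalized relative to a boundary collar, and that the action of a surface diffeomorphism on the completing chords is immaterial up to isotopy in the disks. Once these boundary issues are settled, the functoriality of $\widehat{\,\cdot\,}$, and hence Theorem~\ref{thm:equiv-rtd-gd}, follows formally.
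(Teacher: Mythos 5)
Your proof is correct and follows essentially the same route as the paper: extend the surface diffeomorphism over the capping disks (via the fact that a diffeomorphism of $S^1$ extends over $D^2$) and observe that handleslides performed in $\Sigma_{g,b}$ remain handleslides in the capped surface. The only superfluous part is your bookkeeping of arcs completed by chords in the capping disks: a $(g,k;0,b)$-relative trisection diagram as defined here consists solely of closed curves (the arcs in the figures are auxiliary cut systems for the Kim--Miller algorithm, not part of the diagram), so those boundary issues do not arise.
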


A brief outline of the proof of Theorem~\ref{main:distinct-reltris} is as follows.
If we assume that the two relative trisection diagrams of $S^2\times D^2$ shown in Figure~\ref{fig:td-S2xD2_2} are diffeomorphism and handleslide equivalent, then we obtain a contradiction.
The induced trisection diagrams, which should be diffeomorphism and handleslide equivalent, yield distinct closed $4$-manifolds.

Our method of distinguishing relative trisections is also applicable to other examples.
In \cite{tak23a}, the author gave two minimal genus-$3$ relative trisections of the Akbulut cork, which is a contractible $4$-manifold with boundary.
In a forthcoming paper, we will apply our method to show that these relative trisections are not diffeomorphic.
Moreover, by considering blow-ups of the Akbulut cork, we will also prove that, for every integer $n\geq2$, there exists a $4$-manifold with boundary ($\not\cong S^3$) that admits non-diffeomorphic relative trisections of minimal genus $n$.

\begin{figure}[!tbp]
\centering
\includegraphics[scale=0.85]{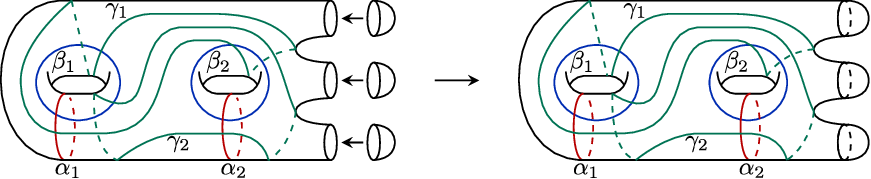}
\caption{An operation that produces a trisection diagram $\widehat{\calD}$ from a relative trisection diagram $\calD$.}
\label{fig:gd}
\end{figure}

%■■■■■■■■■■■■■■■■■■■■■■■■■
%\newpage
\subsection*{Conventions}

Throughout this paper, all manifolds and surfaces are assumed to be compact, connected, oriented, and smooth.
In addition, if two smooth manifolds $X$ and $Y$ are orientation-preserving diffeomorphic, then we write $X\cong Y$. 
Let $\Sigma_{g}$ be a closed, connected, oriented surface of genus $g$, and let $\Sigma_{g,b}$ be a compact, connected, oriented surface of genus $g$ with $b$ boundary components.

%■■■■■■■■■■■■■■■■■■■■■■■■■
%■■■■■■■■■■■■■■■■■■■■■■■■■
%\newpage
\section{Preliminaries}\label{sec:prel}

In this section, we will see several properties of trisections.
A $(g,k)$-trisection $X=X_1\cup X_2\cup X_3$ is a decomposition of a closed $4$-manifold $X$ into three genus-$k$ $4$-dimensional $1$-handlebodies $X_1$, $X_2$, and $X_3$, meeting pairwise in $3$–dimensional $1$–handlebodies, with triple intersection $X_1\cap X_2\cap X_3$ a closed surface of genus $g$.
Generally, the term ``trisection'' refers to the above decomposition for a closed $4$-manifold.
In this paper, we also focus on trisections for $4$-manifolds with boundary, called relative trisections.
For a compact, connected, oriented, smooth $4$-manifold $X$ with non-empty connected boundary, if a decomposition $X = X_1\cup X_2\cup X_3$ is a $(g, k; p, b)$-relative trisection, then the followings hold:
\begin{itemize}
\item
The four integers $g$, $k$, $p$, and $b$ satisfy the inequalities $g,k,p\geq0$, $b\geq1$, and $2p+b-1\leq k\leq g+p+b-1$.
\item
The sectors $X_1$, $X_2$ and $X_3$ are diffeomorphic to the $4$-dimensional $1$-handlebody of genus $k$.
\item
The double intersection $X_i\cap X_j (=\partial{X_i}\cap\partial{X_j})$ is a compression body between $\Sigma_{g,b}$ to $\Sigma_{p,b}$ that is diffeomorphic to the $3$-dimensional $1$-handlebody of genus $g+p+b-1$.
\item
The triple intersection $X_1\cap X_2\cap X_3$ is diffeomorphic to $\Sigma_{g,b}$.
\item
There exists an open book decomposition on $\partial{X}$ with pages $\Sigma_{p,b}$.
\end{itemize}

For the precise definitions of trisections and relative trisections, see \cite{GayKir16} and \cite{CasGayPin18_1}.
We sometimes write a (relative) trisection by the symbol $\calT$.
For a (relative) trisection $X = X_1\cup X_2\cup X_3$, the genus of the triple intersection surface $X_1\cap X_2\cap X_3$ is called the \textit{genus} of the trisection.
The \textit{trisection genus} of a smooth $4$-manifold $X$ is defined as the minimal integer $g$ such that $X$ admits a (relative) trisection of genus $g$.
A (relative) trisection $\calT$ of a $4$-manifold $X$ is \textit{minimal genus} if the genus of $\calT$ is equal to the trisection genus of $X$.

The notions of equivalence for trisections are given by the following.

\begin{dfn}
Two (relative) trisections $X=X_1\cup X_2\cup X_3$ and $X'=X'_1\cup X'_2\cup X'_3$ are called diffeomorphic, if there exists a diffeomorphism $f:X\to X'$ such that $f(X_i)=X'_i$ for each $i\in\{1,2,3\}$.
The above two trisections are called \textit{isotopic}, if there exists an ambient isotopy $\{f_{t}:X\to X'\}_{t\in[0,1]}$ such that $f_{1}(X_i)=X'_i$ for each $i\in\{1,2,3\}$.
\end{dfn}

Remark that if two trisections are isotopic, then they are diffeomorphic.
We easily see that diffeomorphic (relative) trisections have the same $(g,k)$-type (or $(g,k;p,b)$-type).

Now, we will introduce the definition of trisection diagrams.

\begin{dfn}
Let $\Sigma$ and $\Sigma'$ be compact, connected, oriented surfaces.
For $i\in \{1,\ldots,n\}$, let $\alpha^i$ and $\beta^i$ be families of $k$ pairwise disjoint simple closed curves on $\Sigma$ and $\Sigma'$, respectively.
Two $(n+1)$-tuples $(\Sigma;\alpha^1,\ldots,\alpha^n)$ and $(\Sigma';\beta^1,\ldots,\beta^n)$ are \textit{diffeomorphism and handleslide equivalent} if they are related by diffeomorphisms of $\Sigma$ and handleslides within each $\alpha^i$ (i.e., we are only allowed to slide curves from $\alpha^i$ over other curves from $\alpha^i$, but not over curves from $\alpha^j$ when $j\neq i$).
\end{dfn}

The followings are the definitions of a (relative) trisection diagram.

\begin{dfn}\label{def:rtd}
Let $g$ and $k$ be integers satisfying $0\leq k\leq g$.
Let $\Sigma$ be a closed surface which is diffeomorphic to $\Sigma_{g}$, and let $\alpha$, $\beta$, and $\gamma$ be families of $g$ pairwise disjoint simple closed curves on $\Sigma$. 
A $4$-tuple $(\Sigma;\alpha,\beta,\gamma)$ is called a $(g,k)$-\textit{trisection diagram} if $(\Sigma;\alpha,\beta)$, $(\Sigma;\beta,\gamma)$, and $(\Sigma;\gamma,\alpha)$ are diffeomorphism and handleslide equivalent to the standard diagram $(\Sigma_{g};\delta,\epsilon)$ of type $(g,k)$ shown in Figure \ref{fig:std-td}.
\end{dfn}
\begin{figure}[!htbp]
\centering
\includegraphics[scale=1]{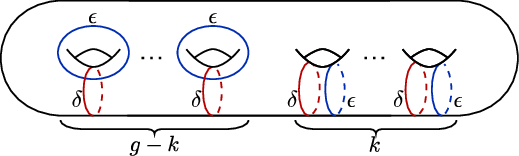}
\caption{The standard diagram $(\Sigma_{g};\delta,\epsilon)$ of type $(g,k)$, where the red curves are $\delta$ and the blue curves are $\epsilon$.}
\label{fig:std-td}
\end{figure}

\begin{dfn}\label{def:rtd}
Let $g$, $k$, $p$, and $b$ be integers satisfying $g,k,p\geq0$, $b\geq1$, and $2p+b-1\leq k\leq g+p+b-1$.
Let $\Sigma$ be a compact surface which is diffeomorphic to $\Sigma_{g,b}$, and let $\alpha$, $\beta$, and $\gamma$ be families of $g-p$ pairwise disjoint simple closed curves on $\Sigma$. 
A $4$-tuple $(\Sigma;\alpha,\beta,\gamma)$ is called a $(g,k;p,b)$-\textit{relative trisection diagram} if $(\Sigma;\alpha,\beta)$, $(\Sigma;\beta,\gamma)$, and $(\Sigma;\gamma,\alpha)$ are diffeomorphism and handleslide equivalent to the standard diagram $(\Sigma_{g,b};\delta,\epsilon)$ of type $(g,k;p,b)$ shown in Figure \ref{fig:td-stdgkpb}.
\end{dfn}
\begin{figure}[!tbp]
\centering
\includegraphics[scale=1]{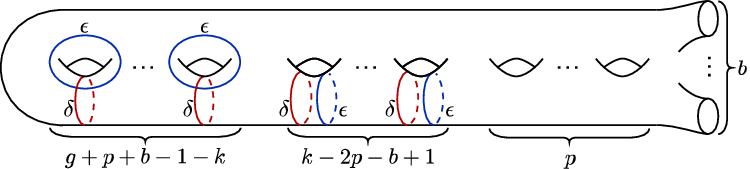}
\caption{The standard diagram $(\Sigma_{g,b};\delta,\epsilon)$ of type $(g,k;p,b)$, where the red curves are $\delta$ and the blue curves are $\epsilon$.}
\label{fig:td-stdgkpb}
\end{figure}

We sometimes denote a (relative) trisection diagram by the symbol $\calD$.
For a trisection diagram $\calD=(\Sigma;\alpha,\beta,\gamma)$, we draw $\alpha$, $\beta$, and $\gamma$ curves by red, blue, and green curves, respectively.
For a diagram $(\Sigma;\alpha^1,\ldots,\alpha^n)$ (not necessarily a trisection diagram) and a diffeomorphism $f$ on $\Sigma$, let $f(\Sigma;\alpha^1,\ldots,\alpha^n):=(f(\Sigma);f(\alpha^1),\ldots,f(\alpha^n))$, where each $f(\alpha^i)$ is the family of the images of the curves under $f$.

As seen above, a trisection diagram is defined independently of a trisection, however, there is a natural one-to-one correspondence between them.

\begin{thm}[Gay--Kirby~\cite{GayKir16}, Castro--Gay--Pinz\'{o}n-Caiced~{\cite{CasGayPin18_1}}]\label{thm:rt-rtd}
 The following (i), (ii), and (iii) hold.
\begin{enumerate}
\item
 For any $(g,k)$- (or $(g,k;p,b)$-relative) trisection diagram $(\Sigma;\alpha,\beta,\gamma)$, there exists a unique (up to diffeomorphism) trisected $4$-manifold $X=X_1\cup X_2\cup X_3$ satisfying the following conditions.
\begin{itemize}
\item
$X_1\cap X_2\cap X_3\cong \Sigma$.
\item
Under the above identification, each of $\alpha$, $\beta$, and $\gamma$ curves bound compressing disks of $X_3\cap X_1$, $X_1\cap X_2$, $X_2\cap X_3$, respectively.
\end{itemize}
\item
For any (relative) trisection $\calT$, there exists a (relative) trisection diagram $\calD$ such that $\calT$ is induced from $\calD$ by (i).
\item
Let $\calD_1$ and $\calD_2$ be (relative) trisection diagrams.
If the induced (relative) trisections $\calT_{\calD_1}$ and $\calT_{\calD_2}$ are diffeomorphic, then $\calD$ and $\calD'$ are diffeomorphism and handleslide equivalent.
\end{enumerate}
\end{thm}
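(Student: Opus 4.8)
The plan is to prove this as the trisection analogue of the classical correspondence between Heegaard splittings and Heegaard diagrams, arguing in essentially the same way. The external inputs I would invoke are Waldhausen's theorem that for each $g\ge k$ the $3$-manifold $\#^{k}(S^{1}\times S^{2})$ has a unique genus-$g$ Heegaard splitting up to isotopy \cite{Wald68}, and the theorem of Laudenbach and Po\'{e}naru, which says that $\natural^{k}(S^{1}\times B^{3})$ is the unique compact $4$-manifold built from $0$- and $1$-handles with boundary $\#^{k}(S^{1}\times S^{2})$ and that every self-diffeomorphism of $\#^{k}(S^{1}\times S^{2})$ extends over $\natural^{k}(S^{1}\times B^{3})$; I also use the classical fact that any two complete meridian systems of a handlebody are related by handleslides and isotopies. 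I would carry out the closed case in detail and then indicate that the relative case runs along the same lines, with handlebodies replaced by compression bodies, Waldhausen's and Laudenbach--Po\'{e}naru's theorems replaced by their relative analogues from \cite{CasGayPin18_1}, and with the open book on $\partial X$ tracked throughout.

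For part (i), given a $(g,k)$-trisection diagram $(\Sigma;\alpha,\beta,\gamma)$ I would first form the genus-$g$ handlebodies $H_{\alpha},H_{\beta},H_{\gamma}$ by attaching $2$- and $3$-handles to $\Sigma\times[0,1]$ along $\alpha\times\{1\}$, $\beta\times\{1\}$, $\gamma\times\{1\}$, each with a fixed identification of its boundary with $\Sigma$, and set $Y_{31}=H_{\alpha}$, $Y_{12}=H_{\beta}$, $Y_{23}=H_{\gamma}$. Since each pairwise subdiagram is, by hypothesis, diffeomorphism and handleslide equivalent to the standard diagram of type $(g,k)$ and neither operation changes the resulting closed $3$-manifold, each of $Y_{31}\cup_{\Sigma}Y_{12}$, $Y_{12}\cup_{\Sigma}Y_{23}$, $Y_{23}\cup_{\Sigma}Y_{31}$ is $\#^{k}(S^{1}\times S^{2})$. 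Filling each with the $4$-dimensional $1$-handlebody $\natural^{k}(S^{1}\times B^{3})$ provided by Laudenbach--Po\'{e}naru, I obtain sectors $X_{1},X_{2},X_{3}$ and glue them along the handlebodies $Y_{12},Y_{23},Y_{31}$, which pairwise intersect in $\Sigma$. A neighbourhood of $\Sigma$ in the result is $\Sigma$ times a disk subdivided into three sectors, so the union is a smooth closed $4$-manifold with a $(g,k)$-trisection having the stated properties. For uniqueness I would note that two such trisected $4$-manifolds have the same spine $\Sigma\cup Y_{12}\cup Y_{23}\cup Y_{31}$, choose diffeomorphisms between corresponding sectors, and correct them over the overlapping handlebodies using the extension half of Laudenbach--Po\'{e}naru to produce a sector-preserving diffeomorphism.

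For part (ii), given a trisection $\calT$ of $X$ I set $\Sigma=X_{1}\cap X_{2}\cap X_{3}$ and pick curve families $\alpha,\beta,\gamma$ bounding complete meridian systems of the handlebodies $X_{3}\cap X_{1}$, $X_{1}\cap X_{2}$, $X_{2}\cap X_{3}$. Since $\partial X_{1}=(X_{1}\cap X_{2})\cup_{\Sigma}(X_{3}\cap X_{1})$ with $X_{1}\cong\natural^{k}(S^{1}\times B^{3})$, this is a genus-$g$ Heegaard splitting of $\#^{k}(S^{1}\times S^{2})$, hence isotopic to the standard one by Waldhausen; under this isotopy the chosen meridian systems differ from the standard curves by handleslides within $\alpha$ and within $\beta$ separately, and likewise for the other two pairs, so $(\Sigma;\alpha,\beta,\gamma)$ is a $(g,k)$-trisection diagram. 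Because the trisection it induces via (i) is reconstructed from the spine of $\calT$, it is diffeomorphic to $\calT$ by the uniqueness in (i). For part (iii), if $f$ is an orientation-preserving diffeomorphism of the trisected $4$-manifolds carrying each sector to the corresponding one, then $f$ restricts to a diffeomorphism $\Sigma\to\Sigma'$ sending each double-intersection handlebody onto the corresponding one; thus $f(\alpha)$ and $\alpha'$ are two complete meridian systems of the handlebody $X'_{3}\cap X'_{1}$ and so are handleslide equivalent, and similarly for $\beta$ and $\gamma$, which shows $f|_{\Sigma}$ realizes $\calD_{1}$ and $\calD_{2}$ as diffeomorphism and handleslide equivalent.

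The step I expect to cost the most is not the closed case sketched above but the relative one: there one must work with compression bodies rather than handlebodies, invoke the relative forms of the Waldhausen and Laudenbach--Po\'{e}naru results, and in particular verify that the open book decomposition on $\partial X$ is genuinely determined by the relative trisection diagram. This compatibility is the heart of \cite{CasGayPin18_1}, and I would quote it rather than reprove it.
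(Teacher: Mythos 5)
The paper does not prove this theorem; it is quoted from Gay--Kirby \cite{GayKir16} and Castro--Gay--Pinz\'{o}n-Caicedo \cite{CasGayPin18_1} and used as a black box. Your sketch correctly reconstructs the standard argument from those sources (Laudenbach--Po\'{e}naru for existence and uniqueness of the $4$-dimensional fillings in (i), Waldhausen plus slide-equivalence of complete meridian systems for (ii) and (iii), with the relative case deferred to the compression-body analogues in \cite{CasGayPin18_1}), so it is consistent with the intended proof.
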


Theorem~\ref{thm:rt-rtd} means the following one-to-one correspondence.
\begin{equation*}
\frac{\{\text{(relative) trisections}\}}{\text{diffeomorphism}} \quad {\stackrel{1:1}{\longleftrightarrow}} \quad \frac{\{\text{(relative) trisection diagrams}\}}{\text{diffeomorphism and handleslide equivalent}}.
\end{equation*}

For a (relative) trisection diagram $\calD$, we denote the induced (relative) trisection by $\calT_\calD$, and we also denote the induced trisected $4$-manifold by $X_{\calD}$.

%■■■■■■■■■■■■■■■■■■■■■■■■■
%■■■■■■■■■■■■■■■■■■■■■■■■■
%\newpage
\section{An operation that produces a trisection diagram from a relative trisection diagram}\label{sect:gd}

We introduce a simple operation that produces a trisection diagram of a closed $4$-manifold from a relative trisection diagram.

\begin{lem}\label{lem:gd_welldef}
Let $\calD=(\Sigma;\alpha,\beta,\gamma)$ be a $(g,k;0,b)$-relative trisection diagram.
We let $\widehat{\Sigma}$ be a genus-$g$ closed surface obtained from $\Sigma$ by gluing $b$ disks $D_1,\ldots,D_b$ to the boundary circles (i.e., $\widehat{\Sigma}:=\Sigma\cup(\sqcup^{b}_{i=1}D_i)$).
(Now we can consider that $\alpha$, $\beta$, and $\gamma$ are families of curves on the resulting closed surface $\widehat{\Sigma}$.)
Then, $\widehat{\calD}:=(\widehat{\Sigma};\alpha,\beta,\gamma)$ is a $(g,k-b+1)$-trisection diagram.
\end{lem}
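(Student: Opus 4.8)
The plan is to reduce everything to the definition of a trisection diagram (Definition~\ref{def:rtd}): I must show that each of the three pairs $(\widehat{\Sigma};\alpha,\beta)$, $(\widehat{\Sigma};\beta,\gamma)$, $(\widehat{\Sigma};\gamma,\alpha)$ is diffeomorphism and handleslide equivalent to the standard $(g,k-b+1)$ genus-$g$ diagram. By the symmetry of the hypotheses, it suffices to treat one pair, say $(\widehat{\Sigma};\alpha,\beta)$. Since $\calD$ is a $(g,k;0,b)$-relative trisection diagram, I know by Definition~\ref{def:rtd} that $(\Sigma;\alpha,\beta)$ is diffeomorphism and handleslide equivalent to the standard relative diagram $(\Sigma_{g,b};\delta,\epsilon)$ of type $(g,k;0,b)$ depicted in Figure~\ref{fig:td-stdgkpb}. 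The key point is that gluing disks to the $b$ boundary circles is a canonical (functorial) operation: a diffeomorphism $\Sigma\to\Sigma_{g,b}$ extends uniquely up to isotopy to a diffeomorphism $\widehat{\Sigma}\to\Sigma_g$ of the capped surfaces, and it carries the capping disks of one to the capping disks of the other; and any handleslide of $\alpha$-curves (resp. $\beta$-curves) supported on $\Sigma$ is in particular a handleslide on $\widehat{\Sigma}$. Hence $(\widehat{\Sigma};\alpha,\beta)$ is diffeomorphism and handleslide equivalent to $(\widehat{\Sigma_{g,b}};\delta,\epsilon) = (\Sigma_g;\delta,\epsilon)$.

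It then remains to identify the capped standard relative diagram $(\Sigma_g;\delta,\epsilon)$ obtained from the standard $(g,k;0,b)$ relative diagram with the standard closed $(g,k-b+1)$ diagram of Figure~\ref{fig:std-td}, up to diffeomorphism and handleslides. This is a concrete model computation. In the standard $(\Sigma_{g,b};\delta,\epsilon)$ picture with $p=0$, the surface $\Sigma_{g,b}$ is built from a genus-$g$ part together with the $b$ boundary circles, and the $\delta$ (resp. $\epsilon$) curves consist of $g$ pairwise disjoint simple closed curves arranged so that $k$ of the pairs are ``parallel'' (cancelling) pairs and the remaining $g-k$ are ``dual'' pairs, with the combinatorics dictated by the inequalities $b-1\le k\le g+b-1$. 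Capping off the $b$ boundary circles has the effect of turning $b-1$ of the would-be cancelling configurations into the trivial/cancelling configuration of the closed model, so that the number of cancelling pairs drops from (the relative count determined by $k,b$) to $k-(b-1) = k-b+1$. I will verify directly from the figures that the result is precisely the standard closed diagram of type $(g, k-b+1)$; along the way I also check that $0\le k-b+1\le g$, which follows from $2\cdot0+b-1\le k\le g+0+b-1$, so the output is a legitimate $(g,k-b+1)$ type. Doing the same for $(\widehat{\Sigma};\beta,\gamma)$ and $(\widehat{\Sigma};\gamma,\alpha)$ — verbatim by the symmetry of Definition~\ref{def:rtd} in $\alpha,\beta,\gamma$ — completes the proof that $\widehat{\calD}$ satisfies Definition~\ref{def:rtd} for $(g,k-b+1)$-trisection diagrams.

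The step I expect to be the main obstacle is the explicit identification of the capped standard relative diagram with the standard closed diagram: this is where one must be careful that the curves $\delta$ and $\epsilon$ near the boundary circles in Figure~\ref{fig:td-stdgkpb} cap off to exactly the right model (rather than to some diagram that only becomes standard after further handleslides — though that too would suffice). I would handle this by describing the standard relative model explicitly as a planar/handle picture, performing the capping, and comparing with Figure~\ref{fig:std-td}. The functoriality-of-capping argument in the first paragraph is essentially formal and carries no real difficulty, but it is worth stating carefully that handleslides and diffeomorphisms on $\Sigma$ extend to $\widehat{\Sigma}$ and do not introduce any new relations between the $\alpha$, $\beta$, $\gamma$ families.
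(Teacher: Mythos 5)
Your proposal is correct and follows essentially the same route as the paper: reduce to one pair, extend the diffeomorphism $\Sigma\to\Sigma_{g,b}$ over the capping disks (the paper justifies this extension by citing Smale's theorem on diffeomorphisms of $D^2$), note that handleslides on $\Sigma$ persist on $\widehat{\Sigma}$, and observe that the capped standard $(g,k;0,b)$ diagram is the standard closed $(g,k-b+1)$ diagram. The only difference is one of emphasis: you spell out the final model identification (which the paper reads off from its figures), while the paper is more explicit about the disk-extension step.
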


\begin{proof}
We need to verify that the three triples $(\widehat{\Sigma};\alpha,\beta)$, $(\widehat{\Sigma};\beta,\gamma)$, and $(\widehat{\Sigma};\gamma,\alpha)$ are diffeomorphism and handleslide equivalent to the standard diagram of type $(g,k-b+1)$.
However, we will prove the case of $(\widehat{\Sigma};\alpha,\beta)$ only, because the other cases can be proved by a similar argument.
Since $\calD$ satisfies the definition of a relative trisection diagram, $(\Sigma;\alpha,\beta)$ is diffeomorphism and handleslide equivalent to the standard diagram of type $(g,k;0,b)$, that is, there exists a diffeomorphism $f:\Sigma\to\Sigma_{g,b}$ and a sequence of handleslides $s=(s_1,s_2,\ldots,s_n)$ such that $f(\Sigma;\alpha,\beta)=(\Sigma_{g,b};f(\alpha),f(\beta))$ becomes isotopic to the standard diagram of Figure~\ref{fig:std-gk0b} after performing the handleslides $s$.

We will now extend $f:\Sigma\to\Sigma_{g,b}$ to a diffeomorphism $\widehat{f}:\widehat{\Sigma} \to \Sigma_g$.
Suppose that $\Sigma_{g,b}$ is canonically embedded in $\Sigma_{g}$ as shown in Figure~\ref{fig:embedded_surf}.
For each $i\in\{1,2,\ldots,b\}$, let $g_i:\partial{D_i}\to\partial{\Sigma}$ be a gluing map of each disk $D_i$.
Then, the map $f\circ g_i:\partial{D_i}\to\partial{\Sigma_{g,b}}\subset\Sigma_{g}$ is a diffeomorphism on a circle.
By the theorem of Smale~\cite{Sma59}, any diffeomorphism on the circle $S^1=\partial{D^2}$ extends to the disk $D^2$.
Thus, there exists a diffeomorphism $\phi_i:D_i\to D^2$ such that $\phi_{i}\mid_{\partial{D_i}}=f\circ g_i$.
Then, define the diffeomorphism $\widehat{f}:\widehat{\Sigma}\to\Sigma_{g}$ by $\widehat{f}(x):=f(x)$ if $x\in\Sigma$ and $\widehat{f}(x):=\phi_{i}(x)$ if $x\in D_i$.
Note that $\widehat{f}$ is well-defined.
Then, $\widehat{f}(\widehat{\Sigma};\alpha,\beta)=(\Sigma_{g};f(\alpha),f(\beta))$ becomes isotopic to the standard diagram of type $(g,k-b+1)$ after performing the handleslides $s=(s_1,s_2,\ldots,s_n)$.
Here, we can consider that each handleslide $s_i$ is performed on $\Sigma_g$, since $\Sigma_{g,b}$ is embedded in $\Sigma_{g}$.
\end{proof}
\begin{figure}[!htbp]
\centering
\includegraphics[scale=1]{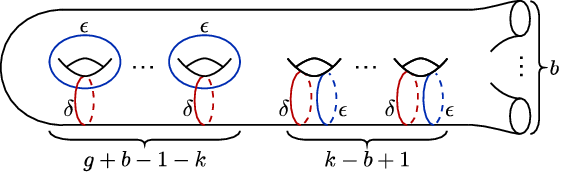}
\caption{The standard diagram $(\Sigma_{g,b};\delta,\epsilon)$ of type $(g,k;0,b)$.}
\label{fig:std-gk0b}
\end{figure}
\begin{figure}[!htbp]
\centering
\includegraphics[scale=0.7]{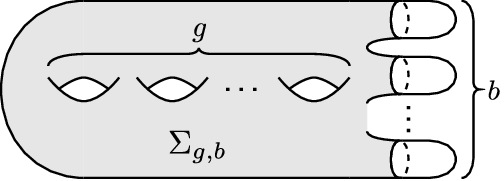}
\caption{A canonical embedding of $\Sigma_{g,b}$ into $\Sigma_g$.}
% The shaded region is $\Sigma_{g,b}$}
\label{fig:embedded_surf}
\end{figure}
\begin{figure}[!htbp]
\centering
\includegraphics[scale=0.9]{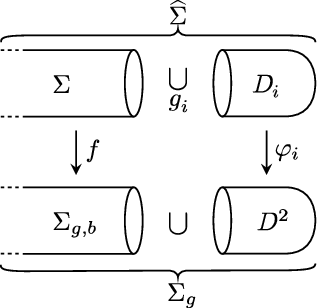}
\caption{A construction of $\widehat{f}:\widehat{\Sigma}\to\Sigma_g$.}
\label{fig:fhat_gdwelldef}
\end{figure}

\begin{proof}[Proof of Theorem~\ref{thm:equiv-rtd-gd}]
The proof is similar to that of Lemma~\ref{lem:gd_welldef}.
Let $\calD:=(\Sigma;\alpha,\beta,\gamma)$ and $\calD':=(\Sigma';\alpha',\beta',\gamma')$.
Since $\calD$ and $\calD'$ are diffeomorphism and handleslide equivalent, there exists a diffeomorphism $f:\Sigma\to\Sigma'$ and a sequence of handleslides $s=(s_1,s_2,\ldots,s_n)$ such that $f(\calD)=(\Sigma';f(\alpha),f(\beta),f(\gamma))$ becomes isotopic to $\calD'=(\Sigma';\alpha',\beta',\gamma')$ after performing the handleslides $s$.

We will now extend the diffeomorphism $f$ over $\widehat{\Sigma}=\Sigma\cup(\sqcup_{b}D^2_i)$.
For each $i\in\{1,2,\ldots,b\}$, let $g_i:\partial{D_i}\to\partial{\Sigma}$ and $g'_i:\partial{D'_i}\to\partial{\Sigma'}$ be gluing maps of the disks $D_i$ and $D_i$, respectively.
Then, the map $g'^{-1}\circ f\circ g_i:\partial{D_i}\to\partial{D'_i}$ is a diffeomorphism on a circle.
By the theorem of Smale~\cite{Sma59}, there exists a diffeomorphism $\phi_{i}:D_i\to D'_i$ such that $\phi_{i}\mid_{\partial{D_i}}=g'^{-1}_i\circ f\circ g_i$.
Then, define the diffeomophism $\widehat{f}:\widehat{\Sigma}\to\widehat{\Sigma'}$ by $\widehat{f}(x):=f(x)$ if $x\in\Sigma$ and $\widehat{f}(x):=\phi_{i}(x)$ if $x\in D_{i}$.
Note that $\widehat{f}$ is well-defined.
Then, $\widehat{f}(\widehat{\calD})=(\widehat{\Sigma'};f(\alpha),f(\beta),f(\gamma))$ becomes isotopic to $\widehat{\calD}'=(\widehat{\Sigma'};\alpha',\beta',\gamma')$ after performing the handleslides $s=(s_1,s_2,\ldots,s_n)$.
Here, we can consider that each handleslide $s_i$ is performed on $\widehat{\Sigma'}$, since $\Sigma'$ is a subsurface of $\widehat{\Sigma'}$.
\end{proof}
\begin{figure}[!htbp]
\centering
\includegraphics[scale=0.9]{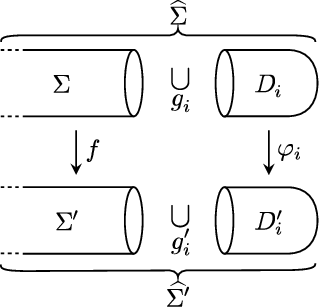}
\caption{A construction of $\widehat{f}:\widehat{\Sigma}\to\widehat{\Sigma'}$.}
\label{fig:fhat_gdthm}
\end{figure}

The following is an immediate corollary.

\begin{cor}\label{cor:equiv-rtd-X-D4}
Let $X$ be a closed $4$-manifold.
Let $\calD$ and $\calD'$ be two $(g,k;0,1)$-relative trisection diagrams of $X-\Int{D^4}$, and assume that they are diffeomorphism and handleslide equivalent.
Then, the $(g,k)$-trisection diagrams $\widehat{\calD}$ and $\widehat{\calD}'$ of the closed $4$-manifold $X$ are also diffeomorphism and handleslide equivalent.
\end{cor}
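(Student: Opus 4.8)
The plan is to derive Corollary~\ref{cor:equiv-rtd-X-D4} directly from Theorem~\ref{thm:equiv-rtd-gd} by specializing the parameter $b$ to $1$, after checking that the hat-construction of Lemma~\ref{lem:gd_welldef} recovers exactly the trisection diagram of the closed manifold $X$ in this case. Concretely, a $(g,k;0,1)$-relative trisection diagram of $X-\Int D^4$ has a single boundary circle, so $\widehat{\Sigma}$ is obtained from $\Sigma$ by capping off that one circle with a disk $D_1$, giving a genus-$g$ closed surface, and by Lemma~\ref{lem:gd_welldef} the tuple $\widehat{\calD}=(\widehat{\Sigma};\alpha,\beta,\gamma)$ is a $(g,k-1+1)=(g,k)$-trisection diagram. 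So the numerology already matches the statement.

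The first step I would carry out is to identify the closed $4$-manifold $X_{\widehat{\calD}}$ induced by $\widehat{\calD}$ with $X$ itself. Since $\calD$ is a relative trisection diagram of $X-\Int D^4$, capping off the single boundary circle of $\Sigma$ with a disk corresponds, on the level of $4$-manifolds, to filling in the $S^3$-boundary of $X-\Int D^4$ with a standard $4$-ball; this is precisely the content of Remark~\ref{rem:equiv-rtd-X-D4} referenced in the introduction (the passage $X \leftrightarrow X-\Int D^4$ via $(g,k)$- vs.\ $(g,k;0,1)$-diagrams). Thus $X_{\widehat{\calD}}\cong X$ and likewise $X_{\widehat{\calD}'}\cong X$, so both hatted diagrams are indeed $(g,k)$-trisection diagrams of the \emph{same} closed $4$-manifold $X$, as asserted.

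The second and final step is the implication itself: if $\calD$ and $\calD'$ are diffeomorphism and handleslide equivalent, then applying Theorem~\ref{thm:equiv-rtd-gd} with $b=1$ immediately gives that $\widehat{\calD}$ and $\widehat{\calD}'$ are diffeomorphism and handleslide equivalent. There is essentially nothing left to prove — the corollary is a literal instance of the theorem once one notes that a $(g,k;0,1)$-relative trisection diagram has $b=1$ and that $k-b+1=k$. I would simply write: ``Apply Theorem~\ref{thm:equiv-rtd-gd} with $b=1$; then $\widehat{\calD}$ and $\widehat{\calD}'$ are $(g,k)$-trisection diagrams, and by the remark above they are trisection diagrams of $X$.''

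The only point requiring any genuine care — and hence the ``main obstacle,'' though a mild one — is making sure that capping the boundary of the relative trisection diagram really does produce a diagram of $X$ and not merely of \emph{some} closed manifold with a compatible $(g,k)$-type. This is exactly what the hat-construction is designed to guarantee: the underlying handlebody and gluing structure is unchanged away from the disks, and filling a $B^4$ back into an $S^3$-boundary is the unique such filling up to diffeomorphism. Once that identification is in place (via the cited remark, or equivalently by a direct Kirby-calculus check that the induced trisection is obtained by the standard stabilization/destabilization relating $X$ and $X-\Int D^4$), the corollary follows with no further work.
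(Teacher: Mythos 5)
Your proposal is correct and follows the same route as the paper, which presents this corollary as an immediate consequence of Theorem~\ref{thm:equiv-rtd-gd} with $b=1$ (so that $k-b+1=k$), with no further argument given. Your additional care in checking that $X_{\widehat{\calD}}\cong X$ (capping the single boundary circle corresponds to refilling the $S^3$ boundary with a $4$-ball) is a reasonable point that the paper leaves implicit.
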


%■■■■■■■■■■■■■■■■■■■■■■■■■
%■■■■■■■■■■■■■■■■■■■■■■■■■
%\newpage
\section{A proof of the non-equivalence of relative trisections}\label{sect:proof_nonequiv}

In this section, we will prove the non-equivalence of relative trisection diagrams by using the operation introduced in the previous section.

\begin{lem}\label{lem:D1D2notdiffeo}
The two diagrams $\calD_1$ and $\calD_2$ of Figures~\ref{fig:td-S2xD2_2} are $(2,1;0,2)$-relative trisection diagrams of $S^2\times D^2$.
In addition, $\calD_1$ and $\calD_2$ are not diffeomorphism and handleslide equivalent.
\end{lem}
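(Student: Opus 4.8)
The plan is to separate the statement into two parts: first, that each of $\calD_1$ and $\calD_2$ is a genuine $(2,1;0,2)$-relative trisection diagram of $S^2\times D^2$, and second, that they are not diffeomorphism and handleslide equivalent. For the first part, I would directly check the definition: for each diagram one must verify that the three pairs $(\Sigma;\alpha,\beta)$, $(\Sigma;\beta,\gamma)$, $(\Sigma;\gamma,\alpha)$ are each diffeomorphism and handleslide equivalent to the standard $(2,1;0,2)$ diagram of Figure~\ref{fig:td-stdgkpb}. This is a finite, explicit sequence of handleslides and an ambient diffeomorphism of $\Sigma_{2,2}$ in each of the six cases (three pairs for each of the two diagrams), and is best presented by drawing the intermediate diagrams. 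Then, to identify the induced $4$-manifold as $S^2\times D^2$, I would invoke Theorem~\ref{thm:rt-rtd}(i) to get a well-defined $X_{\calD_i}$, and compute its diffeomorphism type — for instance by noting that $\widehat{\calD_i}$ is a $(2,1-2+1)=(2,0)$-trisection diagram via Lemma~\ref{lem:gd_welldef}, or by reading off a Kirby diagram / handle decomposition directly from the relative trisection diagram and recognizing $S^2\times D^2$. Checking that the minimal relative trisection genus of $S^2\times D^2$ equals $2$ (so that these are indeed minimal genus) is a separate small point, but it is consistent with the $(g,k;p,b)$ constraints and known low-genus classifications.

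For the second part — the non-equivalence — the strategy is exactly the one advertised in the introduction: argue by contradiction using Theorem~\ref{thm:equiv-rtd-gd}. Suppose $\calD_1$ and $\calD_2$ were diffeomorphism and handleslide equivalent. Since both are $(2,1;0,2)$-relative trisection diagrams, Theorem~\ref{thm:equiv-rtd-gd} (with $g=2$, $k=1$, $b=2$) would imply that the closed trisection diagrams $\widehat{\calD_1}$ and $\widehat{\calD_2}$ — which are $(2,0)$-trisection diagrams by Lemma~\ref{lem:gd_welldef} — are also diffeomorphism and handleslide equivalent. By the one-to-one correspondence of Theorem~\ref{thm:rt-rtd}, this would force $X_{\widehat{\calD_1}} \cong X_{\widehat{\calD_2}}$ as closed $4$-manifolds. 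So it suffices to compute the two closed $4$-manifolds obtained by capping off $\calD_1$ and $\calD_2$ with the two disks, and to check that they are \emph{not} orientation-preserving diffeomorphic — indeed they should come out to be genuinely different closed $4$-manifolds (e.g. one of them $S^2\times S^2$ or $\CC P^2\#\overline{\CC P^2}$ versus $\#^2(S^2\times S^2)$ or similar), which can be distinguished by elementary invariants such as the second Betti number, the intersection form, or the fundamental group.

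The concrete computation of $X_{\widehat{\calD_i}}$ is where the real work lies. For each capped diagram I would convert the $(2,0)$-trisection diagram into a handle decomposition or Kirby diagram of the closed $4$-manifold — the $\alpha$ curves give $1$-handles/$2$-handle attaching data, and the pairs of curves determine the framed link — and then simplify. Since a $(g,0)$-trisection is a small-genus trisection, the resulting closed $4$-manifolds are among a short list (connected sums of $S^4$, $\pm\CC P^2$, $S^2\times S^2$, $S^1\times S^3$), so once the link is in hand the identification is routine.

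\textbf{Main obstacle.} The hard part will be the explicit diagram chasing: verifying the six diffeomorphism-and-handleslide equivalences that establish $\calD_1,\calD_2$ really are relative trisection diagrams, and then correctly reading off and simplifying the two closed $4$-manifolds $X_{\widehat{\calD_1}}$ and $X_{\widehat{\calD_2}}$ from the capped diagrams. These are elementary but error-prone surface-and-curve computations, and the whole argument collapses if the two capped manifolds turn out to be diffeomorphic — so care in that final identification (and in checking it is orientation-\emph{preserving} diffeomorphism that fails) is essential. Everything else is a formal consequence of Theorem~\ref{thm:equiv-rtd-gd} and Theorem~\ref{thm:rt-rtd}.
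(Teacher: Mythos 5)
Your plan follows essentially the same route as the paper: identify the manifolds via a handlebody/Kirby diagram read off from the relative trisection diagrams (the paper uses the Kim--Miller algorithm for this), then argue by contradiction via Theorem~\ref{thm:equiv-rtd-gd}, capping off to get $(2,0)$-trisection diagrams whose closed $4$-manifolds turn out to be $S^2\times S^2$ and $\CC P^2\#\overline{\CC P^2}$, distinguished by the parity of the intersection form. The only caution is that among the invariants you list, only the intersection form (not $b_2$ or $\pi_1$) actually separates these two manifolds, but since you include it the approach is sound.
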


\begin{proof}
In \cite{KimMil20}, Kim and Miller gave an algorithm that produces a handlebody diagram of a $4$-manifold with boundary from a relative trisection diagram.
By using the algorithm, we see that $\calD_1$ and $\calD_2$ induce the handlebody diagrams of $S^2\times D^2$ shown in Figure~\ref{fig:Kd-S2xD2}.
Note that the dotted circles are induced from the cut arcs, and the attaching circles of $2$-handles are induced from $\gamma$-curves.

Assume that $\calD_1$ and $\calD_2$ are diffeomorphism and handleslide equivalent.
By Theorem~\ref{thm:equiv-rtd-gd}, the two $(2,0)$-trisection diagrams $\widehat{\calD_1}$ and $\widehat{\calD_2}$ shown in Figures~\ref{fig:td-S2xS2_CP2CP2bar} are also diffeomorphism and handleslide equivalent.
Thus, the two induced closed $4$-manifolds $X_{\widehat{\calD_1}}$ and $X_{\widehat{\calD_2}}$ are diffeomorphic.
However, $X_{\widehat{\calD_1}}\cong S^2\times S^2$ and $X_{\widehat{\calD_2}}\cong S^2\tilde{\times}S^2 \cong \CC{P^2}\#\overline{\CC{P^2}}$.
\end{proof}
\begin{figure}[!htbp]
\centering
\includegraphics[scale=1.1]{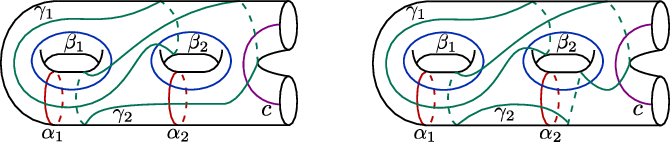}
\caption{Two $(2,1;0,2)$-relative trisection diagrams of $S^2\times D^2$. The left is $\calD_1$, and the right is $\calD_2$. The purple arc $c$ in each diagram is a cut system.}
\label{fig:td-S2xD2_2}
\end{figure}
\begin{figure}[!htbp]
\centering
\includegraphics[scale=0.85]{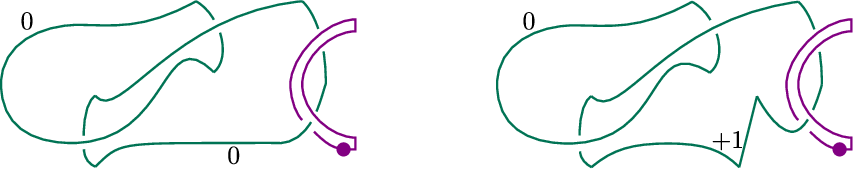}
\caption{Two handlebody diagrams of $S^2\times D^2$. The left diagram is induced from $\calD_1$, and the right one is induced from $\calD_2$.}
\label{fig:Kd-S2xD2}
\end{figure}
\begin{figure}[!htbp]
\centering
\includegraphics[scale=1.1]{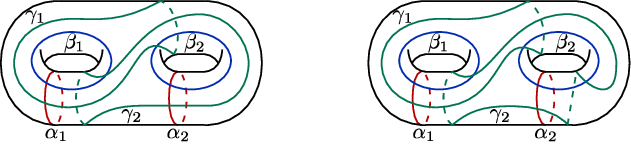}
\caption{Two $(2,0)$-trisection diagrams $\widehat{\calD_1}$ and $\widehat{\calD_2}$.}
\label{fig:td-S2xS2_CP2CP2bar}
\end{figure}

\begin{lem}\label{lem:S2xD2genus2}
The $4$-manifold $S^2\times D^2$ cannot admit a relative trisection of genus less than $2$.
\end{lem}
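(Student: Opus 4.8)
The plan is to combine an Euler characteristic count for relative trisections with the fact that the boundary of a relatively trisected $4$-manifold carries an open book whose page is $\Sigma_{p,b}$. I will argue by contradiction: suppose $S^2\times D^2$ admits a $(g,k;p,b)$-relative trisection with $g\le 1$.

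First I will compute $\chi(W)$ for an arbitrary $4$-manifold $W$ carrying a $(g,k;p,b)$-relative trisection $W=X_1\cup X_2\cup X_3$. The sectors $X_i$, the pairwise intersections $X_i\cap X_j$, and the triple intersection $X_1\cap X_2\cap X_3$ can be realized simultaneously as subcomplexes of a CW structure on $W$, so Euler characteristics are additive in the inclusion--exclusion sense. Using the descriptions recalled in Section~\ref{sec:prel} --- $\chi(X_i)=1-k$ since $X_i$ is the genus-$k$ $4$-dimensional $1$-handlebody, $\chi(X_i\cap X_j)=2-g-p-b$ since it is the genus-$(g+p+b-1)$ $3$-dimensional handlebody, and $\chi(X_1\cap X_2\cap X_3)=\chi(\Sigma_{g,b})=2-2g-b$ --- I obtain
\[
\chi(W)=3(1-k)-3(2-g-p-b)+(2-2g-b)=g+3p+2b-3k-1 .
\]

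Next I specialize to $W=S^2\times D^2$. Since $\chi(S^2\times D^2)=2$, the formula gives $g+3p+2b-3k=3$. Feeding the defining inequality $k\ge 2p+b-1$ into this equation yields $3=g+3p+2b-3k\le g-3p-b+3$, that is, $g\ge 3p+b$. As $p\ge 0$ and $b\ge 1$, this already forces $g\ge 1$, so $g=0$ is impossible; and when $g=1$ it forces $p=0$ and $b=1$, after which the equation $g+3p+2b-3k=3$ gives $k=0$. Hence the only relative trisection type of genus less than $2$ that $S^2\times D^2$ could conceivably admit is $(1,0;0,1)$.

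Finally I rule out the type $(1,0;0,1)$ using the boundary. A $(g,k;p,b)$-relative trisection of $W$ induces an open book on $\partial W$ with pages $\Sigma_{p,b}$; for $(p,b)=(0,1)$ the page is the disk $\Sigma_{0,1}=D^2$. Since the mapping class group of $D^2$ rel boundary is trivial, the monodromy is isotopic to the identity, and gluing a solid torus in the open book fashion to $\partial(D^2\times S^1)$ yields $S^3$; thus $\partial(S^2\times D^2)\cong S^3$, contradicting $\partial(S^2\times D^2)=S^2\times S^1$. (Equivalently: a $4$-manifold with $S^3$ boundary is $X-\Int D^4$ for some closed $X$, and $S^2\times D^2$ is not of this form.) The only steps needing care are the justification that $\chi$ is additive over the trisection decomposition --- routine, via a compatible cell structure --- and the classical identification of the disk open book with $S^3$; I expect the former to be the (minor) main obstacle to phrase crisply, the remainder being bookkeeping with the inequalities.
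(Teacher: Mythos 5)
Your proof is correct and follows essentially the same route as the paper's: both use the Euler characteristic identity $\chi = g-3k+3p+2b-1$ together with the defining inequality $2p+b-1\le k$ to reduce the genus $\le 1$ possibilities to the single type $(1,0;0,1)$, and then rule that out because the induced open book on $\partial(S^2\times D^2)=S^2\times S^1$ would have disk pages, forcing the boundary to be $S^3$. The only cosmetic differences are that you derive the Euler characteristic formula by inclusion--exclusion rather than citing it, and you use the inequality on $k$ directly instead of the paper's auxiliary quantity $A=g+p+b-1-k$.
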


\begin{proof}
Assume that $S^2\times D^2$ admits a $(g,k;p,b)$-relative trisection of genus $g\in\{0,1\}$.
Let $A:=g+p+b-1-k$.
Since this value is the number of the pairs $(\delta_i, \epsilon_i)$ that $\delta_i$ and $\epsilon_i$ intersect at one point in Figure~\ref{fig:td-stdgkpb}, we see $0\leq A\leq g-p$.
It is known that, if a $4$-manifold $X$ admits a $(g,k;p,b)$-relative trisection, then the Euler characteristic $\chi(X)$ is equal to $g-3k+3p+2b-1$ (\cite[Corollary~2.10]{CasOzb19}).
Since $\chi(S^2\times D^2)=2$, we have $g-3k+3p+2b=3$.
By combining this condition and the definition of $A$, we obtain the followings:
\begin{align}
k &=-1-g+p+2A, \label{eqt:k} \\
b &= 3A-2g. \label{eqt:b}
\end{align}
Since $b\geq1$ and $0\leq A\leq g-p$, it holds that
\begin{align}
(2g+1)/3\leq A\leq g-p. \label{eqt:A}
\end{align}

If $g=0$, we see $p\leq-1/3$ by the inequality~(\ref{eqt:A}).
Since $p$ is a non-negative number, this is a contradiction.

If $g=1$, we have $1\leq A\leq 1-p$ by the inequality~(\ref{eqt:A}).
Hence, $p=0$ and $A=1$.
By using the conditions (\ref{eqt:k}) and (\ref{eqt:b}), we have $(g,k;p,b)=(1,0;0,1)$.
Recall that a $(g,k;p,b)$-relative trisection induces an open book decomposition on the boundary with page $\Sigma_{p,b}$.
Thus, we conclude that the boundary $S^2\times S^1$ admits an open book decomposition with disk pages.
This is a contradiction.
For a general discussion of the above, see Section~4 of \cite{Tak22a}.
\end{proof}

By combining Lemmas~\ref{lem:D1D2notdiffeo} and \ref{lem:S2xD2genus2}, we obtain the following, which implies Theorem~\ref{main:distinct-reltris}.

\begin{thm}\label{thm:2rtd_of_S2xD2}
Let $\calT_{\calD_1}$ and $\calT_{\calD_2}$ be two $(2,1;0,2)$-relative trisections of $S^2\times D^2$ induced by $\calD_1$ and $\calD_2$, resectively. Then, they satisfy the following conditions:
\begin{itemize}
\item
$\calT_{\calD_1}$ and $\calT_{\calD_2}$ are minimal genus relative trisections.
\item
$\calT_{\calD_1}$ and $\calT_{\calD_2}$ are not diffeomorphic.
\item
$\calT_{\calD_1}$ and $\calT_{\calD_2}$ induce the same open book decomposition $(\Sigma_{0,2}, \id)$ on the boundary $S^2\times S^1$.
\end{itemize}
\end{thm}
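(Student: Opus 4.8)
The plan is to obtain Theorem~\ref{thm:2rtd_of_S2xD2} as an essentially formal consequence of Lemmas~\ref{lem:D1D2notdiffeo} and~\ref{lem:S2xD2genus2}, combined with the dictionary of Theorem~\ref{thm:rt-rtd} and a one-line homological computation on the boundary. For the first bullet I would argue as follows: by Lemma~\ref{lem:D1D2notdiffeo}, $\calD_1$ and $\calD_2$ are $(2,1;0,2)$-relative trisection diagrams, so by Theorem~\ref{thm:rt-rtd}(i) the induced relative trisections $\calT_{\calD_1}$ and $\calT_{\calD_2}$ have genus $2$. Lemma~\ref{lem:S2xD2genus2} says that $S^2\times D^2$ admits no relative trisection of genus $0$ or $1$, so its trisection genus is exactly $2$; hence $\calT_{\calD_1}$ and $\calT_{\calD_2}$ are minimal genus.

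For the second bullet I would argue by contradiction: if $\calT_{\calD_1}$ and $\calT_{\calD_2}$ were diffeomorphic, then Theorem~\ref{thm:rt-rtd}(iii) would force $\calD_1$ and $\calD_2$ to be diffeomorphism and handleslide equivalent, contradicting the second assertion of Lemma~\ref{lem:D1D2notdiffeo}. Hence $\calT_{\calD_1}$ and $\calT_{\calD_2}$ are not diffeomorphic.

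The third bullet is the only part needing a genuine argument. Since both relative trisections have type $(2,1;0,2)$, i.e. $p=0$ and $b=2$, each induces an open book decomposition of $\partial(S^2\times D^2)=S^2\times S^1$ whose page is $\Sigma_{0,2}$, an annulus. The monodromy of such an open book lies in $\mathrm{MCG}(\Sigma_{0,2},\partial\Sigma_{0,2})\cong\mathbb{Z}$, generated by the Dehn twist $\tau$ along the core curve, so it equals $\tau^{n_j}$ for some $n_j\in\mathbb{Z}$. The total space of the open book $(\Sigma_{0,2},\tau^{n})$ is a lens space with $H_1\cong\mathbb{Z}/n\mathbb{Z}$ (reading $\mathbb{Z}/0\mathbb{Z}=\mathbb{Z}$, the case $S^1\times S^2$); since $H_1(S^2\times S^1)\cong\mathbb{Z}$ we must have $n_1=n_2=0$. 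Therefore both monodromies are isotopic to the identity and both induced open books equal $(\Sigma_{0,2},\id)$. As a cross-check, one can instead read off the page and monodromy directly from the cut system $c$ of Figure~\ref{fig:td-S2xD2_2} using the procedure of Castro--Gay--Pinz\'{o}n-Caicedo, and verify by hand that the monodromy is trivial in each diagram.

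The theorem is thus essentially a corollary, and I expect no serious obstacle. The only point demanding care is the third bullet, where a priori the monodromy of the induced open book could be a nontrivial power of the core Dehn twist; that possibility is killed by the homology of $S^2\times S^1$, which forces the exponent to vanish, after which the identification of the two open books with $(\Sigma_{0,2},\id)$ is immediate.
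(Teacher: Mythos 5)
Your proposal is correct and follows essentially the same route as the paper: minimality from Lemma~\ref{lem:S2xD2genus2}, non-diffeomorphism from Lemma~\ref{lem:D1D2notdiffeo} via Theorem~\ref{thm:rt-rtd}(iii), and the open book identification from the fact that an annulus open book supporting $S^2\times S^1$ must have trivial monodromy. Your homological justification of that last fact (the total space of $(\Sigma_{0,2},\tau^n)$ is a lens space with $H_1\cong\mathbb{Z}/n\mathbb{Z}$, forcing $n=0$) simply fills in the detail the paper asserts in one line.
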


\begin{proof}[Proof of Theorem~\ref{thm:2rtd_of_S2xD2}]
Lemma~\ref{lem:S2xD2genus2} implies the minimality of $\calT_i$.
By Theorem~\ref{thm:rt-rtd} and Lemma~\ref{lem:D1D2notdiffeo}, the induced relative trisections $\calT_{\calD_1}$ and $\calT_{\calD_2}$ are not diffeomorphic.

The $(2,1;0,2)$-relative trisections $\calT_{\calD_1}$ and $\calT_{\calD_2}$ induce open book decompositions with page $\Sigma_{0,2}$.
If an open book with annulus page induces $S^2\times S^1$, then its monodromy is the identity map.
Thus, we conclude both $\calT_{\calD_1}$ and $\calT_{\calD_2}$ induce the same open book decomposition.
\end{proof}

\begin{rem}\label{rem:equiv-rtd-X-D4}
As the contrapositive of Corollary~\ref{cor:equiv-rtd-X-D4}, we see that, if a closed $4$-manifold $X$ admits two non-diffeomorphic $(g,k)$-trisections, then $X-\Int{D^4}$ also admits two non-diffeomorphic $(g,k;0,1)$-relative trisections.
\end{rem}

%■■■■■■■■■■■■■■■■■■■■■■■■■
\subsection*{Acknowledgements}
The author would like to express his adviser Kouichi Yasui for helpful comments and encouragement.
The author was partially supported by JSPS KAKENHI Grant Number 24KJ1561.
%
%■■■■■■■■■■■■■■■■■■■■■■■■■
%\bibliographystyle{amsplain}
%\bibliography{01_skbkn}
%■■■■■■■■■■■■■■■■■■■■■■■■■
%\begin{comment}

%\end{comment}
%■■■■■■■■■■■■■■■■■■■■■■■■■
%■■■■■■■■■■■■■■■■■■■■■■■■■
\end{document}